\newtheorem{thm}{Theorem}
\newtheorem{lemma}[thm]{Lemma}
\newtheorem{prop}[thm]{Proposition}
\newcommand{\barstar}{\mathbin{\bar{*}}}
\newcommand{\Power}{\mathcal{P}}
\newcommand{\N}{\mathbb{N}}
\renewcommand{\phi}{\varphi}
\newcommand{\from}{\colon}
\newcommand{\st}{\,|\,}
\newcommand{\Z}{\mathbb{Z}}
\newcommand{\restr}{\upharpoonright}
\DeclareMathOperator{\Hom}{Hom}
\newcommand{\calA}{\mathcal{A}}
\newcommand{\dikei}{folded kei\xspace}
\title[The quandary of quandles]
 {The quandary of quandles: The Borel completeness of a knot invariant} 
\author{Andrew D. Brooke-Taylor and Sheila K. Miller}
\begin{document}
\maketitle

\begin{abstract}
The isomorphism type of the knot quandle introduced by Joyce %
is a complete invariant of tame knots.  Whether two quandles are isomorphic is in practice difficult to determine; we show that this question is provably hard: isomorphism of quandles is Borel complete.  The class of tame knots, however, is trivial from the perspective of Borel reducibility, suggesting that equivalence of tame knots may be reducible to a more tractable isomorphism problem.
\end{abstract}

\section{Introduction}
Left distributivity arises in the study of many well-known mathematical objects such as groups, knots and braids, and also in the study of large cardinal embeddings in set theory.
Specifically, left distributive algebras are structures with one binary operation $\ast$ satisfying the left self-distributivity law
$a*(b*c)=(a*b)*(a*c)$.  Familiar examples include the conjugation operation on any group and the implication operation on any Boolean algebra;
symmetric spaces in differential geometry provide further
examples \cite{Ber:GJL}.  The first nontrivial example of a free left distributive algebra on one generator is due to Laver \cite{Lav:LDfree}, who showed that the algebra generated by a certain elementary embedding under the application operation is such an algebra (the existence of these embeddings is one of the strongest known set-theoretic axioms).

Other interesting classes of structures are obtained by adding further algebraic axioms to the left distributive law, with an important case being the quandles. Quandles are left distributive algebras satisfying $a*a=a$ and such that for every $a$ and $c$ in the algebra there is a unique $b$ such that $a*b=c$.  It is quandles that are the focus of this note.  Isomorphism type of quandles is a complete invariant of knots, and we prove that isomorphism of quandles is, from the perspective of Borel reducibility, fundamentally difficult (Borel complete).  After first offering an introduction to quandles and Borel reducibility, we present the technical preliminaries in Section \ref{prelims}, give the main result and corollaries in the next section, and discussion in the final section.

In his doctoral thesis, and published in \cite{Joy:KQ}, Joyce rediscovered quandles and coined the term \emph{quandle}.  There he established many foundational relationships, including those between quandles and group conjugation and quandles and knots.  Indeed he showed that the equational theory of quandles is precisely the equational
theory of the conjugation operation: any identity true in every group
with its conjugation operation is also true in every quandle, and hence
provable from the quandle axioms.  The three quandle axioms may also be viewed as algebraic versions of the familiar Reidemeister moves for translating between different regular projections of equivalent knots. 
One may consequently associate
to any tame knot a quandle generated by the arcs of the knot and with identities dictated by the crossings; notice that all such quandles are finitely presented.  In addition to defining the knot quandle in this way, Joyce showed that these quandles in fact constitute complete invariants for tame knots: two tame knots are equivalent if and only if their associated quandles are isomorphic. (Tame knots essentially correspond to one's intuitive notion of finite knots in three-dimensional space, and in particular are not assumed to be endowed with an
orientation.)

The complexity of classification problems and the study of complete invariants for structures have emerged as major themes in set theory.
Broadly, a classification can be thought to assign mathematical objects of one type --- considered up to isomorphism or some other such equivalence relation --- to mathematical objects of another type (again up to an equivalence relation), where the former act as invariants.  Frequently the objects in question, both those to be classified and the invariants, can be encoded by real numbers.  For example, countable structures with underlying set $\mathbb{N}$, such as groups, rings, and indeed left distributive algebras, can be encoded in a natural way by sets of finite tuples of
natural numbers, and hence by reals. Classification then amounts to finding a reasonably definable map from the reals encoding the structures to the reals encoding the invariants that respects the relevant equivalence relations. Of course, the ``reasonably definable'' is important here --- a non-constructive
proof of the existence of such a map using, say, the Axiom of Choice should
not be considered a classification.
A natural way to exclude such uninformative maps would be to require the map to be continuous, but this interpretation is too restrictive to be practical.
The more liberal
constraint that the map be Borel, however, permits almost all
constructions that arise in practice whilst being restrictive enough to obtain meaningful theorems about the framework.

Classifying structures using Borel maps between sets of encoding reals gives rise to the notion of Borel reducibility.  Given two equivalence relations $E$ and $F$ on real numbers, say that $E$ is Borel reducible to $F$, written $E\leq_B F$, if there is a Borel function $f$ from $\mathbb{R}$ to $\mathbb{R}$ such that for all $x$ and $y$ in $\mathbb{R}$, $x\mathrel{E}y$ holds if and only if $f(x)\mathrel{F}f(y)$ holds.
Establishing that one equivalence relation is
\emph{not} Borel reducible to another has been used in a number of cases
to show that a classification problem is impossible to resolve.
For example, Farah, Toms, and T\"ornquist \cite{FTT:TOECNCA} used this analysis to show
that unital simple separable nuclear $C^*$-algebras are not
classifiable by countable structures (note that each adjective makes the
theorem stronger),
and Foreman, Rudolph, and Weiss \cite{FRW:CPET} showed that ergodic measure-preserving transformations of the unit interval
are not classifiable by countable structures (and indeed much more).
For more on this area see, for example, Hjorth's book \cite{Hjo:COER}. 
Within the scope of knot theory, Kulikov \cite{Kul:NcWK} has recently shown that the class of all knots --- including, for example, wild knots with infinitely many crossings --- is not classifiable by countable structures.

Against this background it is natural to ask: what is the Borel complexity of the isomorphism relation on the most general class of countable left distributive structures, the countable left distributive algebras? This question was indeed posed to the second author by Matt Foreman.  In this note we show that it has the maximum
possible complexity for an isomorphism relation on countable structures:
in the standard terminology introduced in the seminal paper of
Friedman and Stanley \cite{FS:BRCCS},
isomorphism of left distributive algebras is \emph{Borel complete}.
Moreover the same is true for 
the subclasses of racks, quandles, and keis (see Section~\ref{prelims} for definitions).
We show directly that
isomorphism of keis (Definition~\ref{algdefns}.\ref{keidefn}) is
Borel complete; the result for the other, more general classes follows.
We also show that the related class of expansions of left distributive algebras satisfying the set of
axioms Laver \cite{Lav:DAFLDA} denoted by $\Sigma$
(Definition~\ref{LDmonoiddefn})
is Borel complete, although the argument proceeds differently.

Knot theorists express some dissatisfaction with quandles as knot invariants
because of the difficulty in determining whether two quandles are isomorphic.
This difficulty is perhaps not surprising:
our result says that isomorphism of arbitrary countable quandles is
Borel complete.
By contrast, tame knots can reasonably be encoded up to equivalence by
equivalence classes of natural numbers rather than reals, and hence are
trivial in the context of Borel reducibility.
It is therefore reasonable to hope that a complete invariant for knots
that is
simpler than the quandle (in terms of Borel reducibility) might be discovered.
Of course, the subclass of those quandles arising from tame knots is
countable up to quandle isomorphism.
Furthermore, as previously remarked,
all quandles from tame knots are finitely
presented; the class of finitely presented quandles also has only countably
many members up to isomorphism, and so is trivial in Borel reducibility terms.
Finitely presented quandles are thus optimal in this sense as invariants for
tame knots, but their finite presentability is crucial to this fact.
We speculate that a non-Borel complete class of structures with a
definition that does not depend on the cardinality of the presentation of the
structure may provide complete invariants for tame knots which are
in practice easier to test for isomorphism.

\section{Preliminaries}\label{prelims}

As we will be discussing the related classes of left distributive algebras, racks, quandles, and keis, we begin by giving some intuition for them.
These classes of 
structures can usefully be understood in terms of the behaviour of
the action of left multiplication by an element of the algebra.
For structures with underlying set $A$ and binary operation $*$, and for each $a$ in $A$, denote by $m_a$ the map from $A$ to $A$ that
acts by multiplication on the left by $a$, that is, $m_a(b)= a*b$.
Then \emph{left distributive algebras} are those for which
$m_a$ is a
homomorphism from $A$ to itself for each $a$ in $A$.
A \emph{rack} is a left distributive algebra in which
each $m_a$ is an automorphism
(indeed Brieskorn \cite{Bri:ASBS} referred to racks as \emph{automorphic sets}). In a \emph{quandle}, $m_a$ an automorphism and
$a$ is a fixed point of $m_a$ for each $a$ in $A$.
Finally, a \emph{kei} (also called an \emph{involutory quandle}) is a quandle such that each $m_a$ is its own inverse.
The word \emph{quandle} was introduced in 1982 by Joyce \cite{Joy:KQ},
and \emph{kei} in 1943 by Takasaki~\cite{Tak:Kei}, who introduced several variants of keis, many of them reflecting symmetries of geometric configurations of points in the plane.  While together at Cambridge, Wraith and Conway investigated what remains of a group when all the other structure is neglected and only conjugation remains; as a pun on Wraith's name and these wrecked groups, Conway called them \emph{wracks} \cite{Wra:PSK}.
Fenn and Rourke \cite{FeR:RLCT} took this term,
adjusted the spelling to rack, and gave it the present precise meaning.

Formally, these structures can be defined using the following axioms:
\renewcommand{\theenumi}{\roman{enumi}}
\begin{enumerate}
\item\label{LD} For every $a, b,$ and $c$ in $A$, $a*(b*c) = (a*b)*(a*c)$.
\item\label{R} For all $a$ and $c$ in $A$,
there is a unique $b$ in $A$ such that $a*b=c$.
\item\label{Q} For every $a$ in $A$, $a*a=a$.
\item\label{K} For all $a$ and $b$ in $A$, $a*(a*b)=b$.
\end{enumerate}

\renewcommand{\theenumi}{\arabic{enumi}}
\begin{defn}\label{algdefns}
For a set $A$ with one binary operation $\ast$ (an \emph{algebra}),
define:
\begin{enumerate}
\item A \emph{left distributive algebra} is an algebra satisfying axiom (\ref{LD}).
\item A \emph{rack} is an algebra satisfying axioms (\ref{LD}) and (\ref{R}).
\item A \emph{quandle} is an algebra satisfying axioms (\ref{LD}),
(\ref{R}) and (\ref{Q}).
\item\label{keidefn} A \emph{kei} is an algebra satisfying axioms (\ref{LD}),
(\ref{R}), (\ref{Q}) and (\ref{K}).
\end{enumerate}
\end{defn}

There are a number of choices to be made in presenting the above definitions.
Instead of using axiom (\ref{R}),
one can formulate racks using a second operation $\barstar$
such that
the function $m_a:b\mapsto a*b$ is inverse to
the function $b\mapsto a\barstar b$: formallly,
one requires that
for all $a$ and $b$, $a\barstar(a*b)=a*(a\barstar b)=b$ holds.
This has the advantage of eliminating the existential quantifier.
Whether to consider self distributive structures as left distributive, like we do here, or right distributive
(with axioms (\ref{R}) and (\ref{K}) reformulated
for right multiplication) is an arbitrary choice.
Many relevant references on racks, quandles, and keis use right distributivity; we chose left distributivity
in order to easily view these classes of structures as subclasses of
the left distributive algebras.

There is another well-studied left distributive structure, this one with two operations: the left distributive operation $\ast$ and another operation $\circ$ that behaves like composition.
These algebras were first studied by Laver
\cite{Lav:LDfree} as algebras of large cardinal embeddings in which the operation $\circ$ is in fact composition.

\begin{defn}\label{LDmonoiddefn}
We denote by $\Sigma$ the following collection of four identities.
\begin{center}
$\begin{array}{lcl}
a\circ(b\circ c) &= &(a\circ b)\circ c 
\\
(a\circ b)* c &= & a*(b* c) 
\\
a*(b\circ c) &= &(a*b)\circ(a* c) 
\\
(a*b)\circ a &= &a\circ b 
\end{array}$
\end{center}
\end{defn}
Note that left distributivity follows from the second and fourth identities via the equalities
$a*(b*c)=(a\circ b)*c=((a*b)\circ a)*c=(a*b)*(a*c)$.
Dehornoy refers to algebras satisfying $\Sigma$ as
\emph{LD-monoids}; we use Laver's original
phrase ``algebras satisfying $\Sigma$'' to avoid any potential confusion with other uses of ``monoid.''

If $\circ$ is a group operation on $A$ then the fourth equational condition
of $\Sigma$ determines that the
other operation $*$ must be the conjugation operation
$a*b=a\circ b\circ a^{-1}$.
Taking $*$ to denote conjugation in the group in question,
it is straightforward to check that the other identities of $\Sigma$
are also satisfied,
so any group with its multiplication and conjugation operations is an
algebra satisfying $\Sigma$.

Laver showed, among other things, that $\Sigma$ is a conservative extension of the left distributive law \cite{Lav:LDfree}.
Thus any free left distributive algebra may be expanded to a free algebra on the same generators satisfying $\Sigma$: any identity on elements of the free left distributive algebra will hold in the algebra satisfying $\Sigma$ if and only if it is a consequence of the left distributive law.  For more on this, the linearity of several orderings on the free left distributive algebra (from the large cardinal hypothesis), and a normal form for terms in the free left distributive algebra, see \cite{Lav:LDfree} and \cite{Lav:DAFLDA}.  For a simpler proof and fuller account of the theory of left distributive algebras, see \cite{LaM:FOGLDA}.
Using braid groups Dehornoy showed within the standard axioms of set theory that the above-mentioned orderings on the free left distributive algebra are linear
\cite{Deh:BGLDO}; Dehornoy has also contributed substantially to the literature on algebras satisfying $\Sigma$.  See, for example, \cite{Deh:BSD}.

We now move on to preliminaries regarding Borel reducibility.
Recall that a subset of a topological space is \emph{Borel}
if it lies in the least $\sigma$-algebra containing the open sets, and
that a function between two topological spaces is Borel if the inverse
image of any Borel set (or equivalently, of any open set) is Borel.
Thus, to discuss Borel reducibility between classes of countable structures,
we first define a topology on each of these classes.
We briefly sketch this definition here,
and refer the reader to Section~2.3
of Hjorth's book \cite{Hjo:COER} for further details.

We exclusively consider countable structures,
and so may assume that each structure has underlying
set $\N$. Furthermore all of the classes of structures we consider are
first-order, namely, the structures have finitely many relations and operations,
and the class is defined by formulas involving these relations and operations.
The relations and operations of a structure in one of these classes can
thus be represented by a set of tuples from $\N$.
Indeed we follow the common practice of identifying a directed graph
$(\N,E)$ (with vertex set $\N$)
with the set $\{(m,n)\st m\mathrel{E}n\}\subseteq\N^2$,
and we may identify an algebra $(\N,*)$
with the set $\{(\ell,m,n)\st \ell*m=n\}\subset\N^3$.
The space of countable structures
for a given signature with finitely many operation and relation symbols
can thus be identified with a
subset of Cantor space
via the usual identification of a power set $\Power(X)$
with the space of characteristic functions $2^X$; the set $X$ here is
a product of sets of the form $\N^k$, one for each relation and operation,
and is in particular countable.
The topology considered on these classes is the standard topology on the
Cantor space.  Note that a clopen subbase for this topology is given
by the sets defined by determining a single ``bit'' from $2^X$ ---
for example, on the space of countable algebras with underlying set $\N$,
the subbase is the
collection as $\ell, m$, and $n$ vary over $\N$ of all
sets either of the form $\{(\N,*)\st \ell*m=n\}$ or of the form
$\{(\N,*)\st \ell*m\neq n\}$.

We deviate from this conventional framework in one detail:
for expositional clarity,
the keis that we construct will have underlying set
$\N\times\{0,1\}$ rather than $\N$.  However, this discrepancy can be easily
overcome using the canonical
identification of $\N\times\{0,1\}$ with $\N$ via the map
$(n,i)\mapsto 2n+i$.

Note that the Cantor space $2^X$ with $X$ countable is a separable topological
space (that is, it has a countable dense set) and may be endowed with a complete
metric: identifying $X$ with $\N$, let $d(x,y)=2^{-n}$ where $n$ is least such
that $x(n)\neq y(n)$.
Separable, completely metrizable spaces such as $2^X$ and
$\mathbb{R}$ are known as \emph{Polish spaces}.
As outlined in the Introduction, we have the following standard definitions.
\begin{defn}
Let $X$ and $Y$ be Polish spaces, $E$ an equivalence relation on $X$, and
$F$ an equivalence relation on $Y$.  We say that $E$ is
\emph{Borel reducible to $F$}, written $E\leq_BF$, if there is a Borel
function $f$ from $X$ to $Y$ such that for all $x$ and $x'$ in $X$,
$x\mathrel{E}x'$ holds (that is, $x$ is $E$-equivalent to $x'$) if and only if
$f(x)\mathrel{F}f(x')$ holds.\\

We say that $E$ is \emph{continuously reducible} to $F$, written $E \leq_c F$, if there is a continuous function $f$ from $X$ to $Y$ such that for all $x$ and $x'$ in $X$, $x \mathrel{E} x'$ if and ony if $f(x) \mathrel{F} f(x')$.\\

If $F$ is the isomorphism relation for a first-order class of countable structures for a finite signature each with underlying set $\N$, we say $F$ is \emph{Borel complete} if every other such class has isomorphism relation Borel reducible to $F$.
\end{defn}

Continuous maps are of course Borel, and all maps we construct in the sequel will be continuous so in particular Borel.

\section{Keis are Borel Complete}

It is folklore that the class of countable irreflexive directed graphs is
Borel complete --- see Section 13.1 of Gao's book \cite{Gao:IDST} for
a proof of the stronger statement that the subclass of countable irreflexive symmetric graphs is Borel complete.
The general strategy of this section is to construct a kei from an arbitrary irreflexive directed graph, and then to show that the resulting keis are isomorphic if and only if the original graphs are isomorphic. Since the map taking each irreflexive directed graph to the corresponding kei will be Borel (indeed, continuous), this will establish that the class of countable keis is also Borel complete. To this end we shall describe
how to build what Kamada \cite{Kam:QDS} calls a \emph{dynamical quandle};
the specific dynamical quandles we construct will in fact be keis.

In all of the sequel we exclusively discuss graphs that are irreflexive and directed, but for the sake of the casual reader, we will repeat these hypotheses each time they are used.

Let $A$ be a set and $\tau$
a bijection from $A$ to itself.  
Let $\phi$ be a map from $A$ to the power set $\Power(A)$
such that for every $a\in A$,
$\phi(a)$ contains $a$, $\phi(a)$ is closed under $\tau$ and $\tau^{-1}$, and
$\phi(a)=\phi(\tau a)$.  We will refer to such maps $\phi$ as \emph{$\tau$-replete}.
Kamada observes \cite[Theorem~4]{Kam:QDS} that with the operation $*$
defined by
\[
a*b =
\begin{cases}
b&\text{if }a\in\phi(b)\\
\tau b&\text{if }a\notin\phi(b),\\
\end{cases}
\]
the structure $(A,*)$ is a quandle.
Kamada uses an equivalent definition with a function $\theta$ defined
on $\tau$-orbits
rather than our orbit-invariant function $\phi$ on elements of $A$.
Axioms (ii) and (iii) of Definition \ref{algdefns} are immediate from the
assumptions on $\phi$, and (i) follows by checking cases:
\[
a*(b*c)=(a*b)*(a*c)=
\begin{cases}
c&\text{if }a\in\phi(c)\text{ and }b\in\phi(c)\\
\tau c&\text{if }a\in\phi(c)\text{ and }b\notin\phi(c)\\
\tau c&\text{if }a\notin\phi(c)\text{ and }b\in\phi(c)\\
\tau^2c&\text{if }a\notin\phi(c)\text{ and }b\notin\phi(c).
\end{cases}
\]
Moreover, if $\tau$ is an involution, then clearly axiom (iv) also holds
and so the quandle is a kei.  Following Kamada, but using our
$\phi$ rather than Kamada's $\theta$, we call
this $(A,*)$ the \emph{quandle derived from $(A,\tau)$ relative to $\phi$}.
Kamada named the objects so constructed \emph{dynamical quandles},
in line with a
view of the pair $(A,\tau)$ as a dynamical system, and we shall call those
dynamical quandles that are keis 
\emph{dynamical keis}.

To encode an irreflexive directed graph $G=(V,E)$ into a kei $Q_G$,
we use the dynamical quandle construction with underlying set
a \emph{pair} of copies of the vertex set $V$ of $G$.
Our involution $\tau$ simply
switches between the two copies of the vertex set, and
the function $\phi$ corresponds to choosing the set of neighbours
(in one direction) for each vertex of $G$, irrespective of which
copy of $V$ the vertices lie in. 

\begin{defn}\label{keifromgraph}
Suppose $G=(V,E)$ is an irreflexive directed graph.
Let $\tau$ be the involution on $V\times\{0,1\}$ taking $(v,0)$ to $(v,1)$
and $(v,1)$ to $(v,0)$ for every $v$ in $V$.
Let $\bar\phi_G$ be the function from $V$ to $\Power(V)$ defined by
$u\in\bar\phi_G(v)$ if and only if $u\mathrel{E}v$ or $u=v$.
Let $\phi_G$ from $V\times\{0,1\}$ to $\Power(V\times\{0,1\})$ be the
function obtained from $\bar\phi_G$ by ignoring second coordinates:
$(u,i)\in\phi_G(v,j)$ if and only if $u\in \bar\phi_G(v)$, that is,
if and only if $u\mathrel{E}v$ or $u=v$.
Note that $\phi_G$ is $\tau$-replete.
The \emph{kei $Q_G$ associated to $G$} is the quandle derived from
$(V\times\{0,1\},\tau)$ relative to $\phi_G$, and we denote the operation
on $Q_G$ by $*_G$.
\end{defn}

Thus, $Q_G$ is a kei on underlying set $V\times\{0,1\}$ with
operation $*$ such that
$(u,i)*(v,j)$ equals $(v,j)$ if there is an edge from $u$ to $v$ in $G$ or if \(u=v\), and $(u,i)*(v,j)$ is $(v,1-j)$ otherwise.


We now begin toward Theorem \ref{cong}, which says that the dynamical keis $Q_G$ and $Q_{G'}$ constructed from graphs $G$ and $G'$ are isomorphic if and only if the graphs $G$ and $G'$ are isomorphic.  First we prove the existence of a particular, useful involution of the kei $Q_G$ (Lemma \ref{IW}). 

\begin{lemma}\label{IW}
For every irreflexive directed graph $G$ with underlying set $V$ and every $W\subseteq V$, the function
$I_W:Q_G\to Q_G$ 
defined by
\[
I_W(v,j)=
\begin{cases}
(v,j)&\text{if }v\in W\\
(v,1-j)&\text{ if } v \notin W
\end{cases}
\]
is an involution of $Q_G$.
\end{lemma}
\begin{proof}
By inspection $I_W$ is a bijection and moreover $(I_W)^2$ is the identity map.
To see that $I_W$ respects the quandle operation $*$ of $Q_G$, we must verify that $I_W((u,i) \ast (v,j)) = I_W(u,i) \ast I_W(v,j)$.  Note
that for each $(v,j)\in Q_G$, either both of $(u,0)$ and $(u,1)$ are in $\phi_G(v,j)$ or neither is, so \[
(u,i)*(v,j)=(I_W(u,i))*(v,j)=
\begin{cases}
(v,j)&\text{if }(u,i) \in \phi(v,j)\\
(v,1-j)&\text{ if } (u,i) \notin \phi(v,j).
\end{cases}
\]
So
\[
I_W((u,i)*(v,j))=
\begin{cases}
I_W(v,j)&\text{if }(u,i) \in \phi(v,j)\\
I_W(v,1-j)&\text{ if } (u,i) \notin \phi(v,j)
\end{cases}
\]
and 
\[
I_W((u,i))*I_W(v,j)=
\begin{cases}
I_W(v,j)&\text{if }(u,i) \in \phi(I_W(v,j)) = \phi((v,j))\\
(v,1-j) = I_W(v,1-j) &\text{ if } v \in W \text{ and } (u,i) \notin \phi((v,j))\\
(v, j) = I_W(v, 1-j) &\text{ if } v \notin W \text{ and } (u, i) \notin \phi((v,j)).
\end{cases}
\]
Thus it is established that $I_W$ is a homomorphism, indeed an involution of $Q_G$.
\end{proof}

A slicker if less direct proof of Lemma \ref{IW} is to consider the graph $G'$ on $V\,\dot{\cup}\,\{v_0\}$ (where $\dot{\cup}$ denotes disjoint union) with $G' \upharpoonright V = G$ and $v_0 \mathrel{E} v$ if and only if $v$ is in $W$ for each $v$ in $V$.  Then $Q_{G'} \upharpoonright V \times \{0,1\} = Q_G$, and $m_{v_0} \upharpoonright Q_G = I_W$.

The keis constructed in Definition \ref{keifromgraph} are in fact quite general dynamical keis.  Indeed the only extra constraint we need on dynamical keis
to get a kei $Q_G$ associated to a graph $G$ is that the involution $\tau$ has no fixed points.

\begin{defn}\label{dikei defn}
A kei $(A,*)$ is called a
\emph{\dikei}\footnote{In baking, one \emph{folds} ingredients
to achieve complete mixing with minimal disruption.}
if there is an involution $\tau$ of $A$ with no
fixed points and a $\tau$-replete function $\phi$ 
such that $(A,*)$ is the
quandle derived from $(A,\tau)$ relative to $\phi$.
\end{defn}

By definition the kei $Q_G$ associated to any graph $G$ is a folded kei.
As alluded to above, we also have a converse to this.

\begin{prop} \label{Qg is graph kei}
Every \dikei is isomorphic to a kei of the form $Q_G$ for some
irreflexive directed graph $G$.
\end{prop}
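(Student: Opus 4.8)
Given a folded kei $(A,*)$ with witnessing fixed-point-free involution $\tau$ and $\tau$-replete function $\phi$, I want to read off a graph $G$ on a vertex set $V$ such that $Q_G \cong (A,*)$. Since $\tau$ is a fixed-point-free involution, $A$ splits into two-element orbits $\{a, \tau a\}$; the idea is to take $V$ to be the set of these orbits (or, concretely, to choose one representative from each orbit), so that $A$ is in bijection with $V \times \{0,1\}$ in a way that conjugates $\tau$ to the swap involution of Definition~\ref{keifromgraph}. Then I define the edge relation $E$ on $V$ by declaring, for orbits $[a] \neq [b]$, that $[a] \mathrel{E} [b]$ if and only if $a' \in \phi(b')$ for some (equivalently, by $\tau$-repleteness, every) choice of representatives $a' \in [a]$, $b' \in [b]$.

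**Key steps, in order.** First, fix a transversal $V \subseteq A$ for the $\tau$-orbits and let $\sigma : V \times \{0,1\} \to A$ be the bijection sending $(v,0) \mapsto v$ and $(v,1) \mapsto \tau v$; check that $\sigma$ conjugates the swap involution on $V \times \{0,1\}$ to $\tau$. Second, observe that $\tau$-repleteness of $\phi$ says exactly that membership $a \in \phi(b)$ depends only on the $\tau$-orbits of $a$ and $b$: indeed $\phi(b) = \phi(\tau b)$ gives orbit-invariance in the second argument, and closure of $\phi(b)$ under $\tau, \tau^{-1}$ gives it in the first. Hence the relation $\bar\phi$ on $V$ defined by $u \in \bar\phi(v) \iff \sigma(u,0) \in \phi(\sigma(v,0))$ satisfies $u \in \bar\phi(v) \iff \sigma(u,i) \in \phi(\sigma(v,j))$ for all $i,j$, and $v \in \bar\phi(v)$ always (since $a \in \phi(a)$). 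Third, define $G = (V,E)$ by $u \mathrel{E} v \iff (u \in \bar\phi(v)$ and $u \neq v)$; this $G$ is irreflexive by construction, and its associated $\bar\phi_G$ of Definition~\ref{keifromgraph} — namely $u \in \bar\phi_G(v) \iff u \mathrel{E} v$ or $u = v$ — coincides with $\bar\phi$. Fourth, verify that $\sigma$ is an isomorphism $Q_G \to (A,*)$: both operations are defined by the same case split on membership in the respective $\phi$'s, and we have arranged $\sigma(x) \in \phi(\sigma(y)) \iff x \in \phi_G(y)$, so $\sigma((x) *_G (y)) = \sigma(x) * \sigma(y)$ follows by checking the two cases, using $\sigma \circ (\text{swap}) = \tau \circ \sigma$ for the "not in $\phi$" branch.

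**Main obstacle.** None of the individual verifications is hard; the one point requiring care is the bookkeeping around orbit-invariance, i.e.\ making fully explicit that $\tau$-repleteness lets the two-coordinate membership predicate on $A$ descend to a well-defined relation on the transversal $V$, and that this descended relation, fed back through Definition~\ref{keifromgraph}, reproduces $\phi$ exactly rather than merely up to the relabelling. I would also remark that the choice of transversal is immaterial: different transversals yield isomorphic graphs (the isomorphism being induced by $\tau$ on the coordinates where the transversals disagree), consistent with Lemma~\ref{IW}, though this is not strictly needed for the statement. Modulo this routine-but-fiddly checking, the proof is just the observation that the construction of Definition~\ref{keifromgraph} is, up to the canonical identification of a fixed-point-free involution's domain with a product by $\{0,1\}$, surjective onto folded keis.
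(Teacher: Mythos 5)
Your proposal is correct and follows essentially the same route as the paper's proof: choose a transversal $V$ for the $\tau$-orbits, descend $\phi$ to a relation on $V$, read off the edge relation, and check that the map sending $(v,0)$ to $v$ and $(v,1)$ to $\tau v$ is a kei isomorphism. Your explicit exclusion of loops when defining $E$ (so that $G$ is irreflexive while $\bar\phi_G$ still recovers the descended $\bar\phi$) is a small point the paper passes over silently, but otherwise the two arguments coincide.
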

\begin{proof}
Let $(A,*)$ be a \dikei, and in particular
suppose $(A,*)$ is the quandle derived from $(A,\tau)$ relative to $\phi$
for $\tau$ an involution of $A$ without fixed points and $\phi$ a $\tau$-replete function from $A$ to $\Power(A)$.
Choose a subset $V$ of $A$ such that
for each pair $\{a,\tau a\}$ of elements of $A$,
exactly one of $a$ and $\tau a$ is in $V$,
and express $A$ as the disjoint union $A=V\cup\{\tau v\st v\in V\}$.
For each $v$ in $V$, let $\bar{\phi}(v)$ denote the set $\phi(v)\cap V$;
since $(A,*)$ is the quandle derived from $(A,\tau)$ relative to $\phi$
we have that $\bar{\phi}(v)$ is the set of $u$ in $V$ such that $u*v=v$
(this $\bar\phi$ will be $\bar{\phi}_G$ as in Definition~\ref{keifromgraph}
for the graph $G$ we now construct).
Take the directed graph $G$ on vertex set $V$ with edge relation defined by
$u\mathrel{E}v$ if and only if $u\in\bar{\phi}(v)$ holds.
Then it is straightforward to check that the map from $Q_G$ to $A$ taking
$(v,0)$ to $v$ and $(v,1)$ to $\tau v$ is an isomorphism of keis.
\end{proof}

We will now state the main result.

\begin{thm}\label{cong}
For irreflexive directed graphs $G$ and $G'$ and the associated keis $Q_G$ and $Q_G'$, $G \cong G'$ if and only if $Q_G \cong Q_{G'}$
\end{thm}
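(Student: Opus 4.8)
The plan is to treat the two directions of the equivalence separately, the forward implication being routine and the reverse one carrying the real content. If $\sigma\from V\to V'$ is an isomorphism of the digraphs $G$ and $G'$, I would simply check that $\hat\sigma\from Q_G\to Q_{G'}$, $(v,j)\mapsto(\sigma v,j)$, is an isomorphism of keis: it is visibly a bijection, and since $u\mathrel{E}v$ (respectively $u=v$) holds if and only if $\sigma u\mathrel{E'}\sigma v$ (respectively $\sigma u=\sigma v$) holds, the two cases in the definition of $*_G$ are carried to the corresponding cases in the definition of $*_{G'}$, so $\hat\sigma$ respects the operation.

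For the reverse direction, let $\Psi\from Q_G\to Q_{G'}$ be an isomorphism of keis. The first step is to examine the orbits of the inner automorphism group $\mathrm{Inn}(Q_G)$ generated by the left translations $m_a\from x\mapsto a*x$. From the definition of $*_G$, the map $m_{(u,i)}$ does not depend on $i$; it fixes the \emph{column} $C_v:=\{(v,0),(v,1)\}$ pointwise if $u\mathrel{E}v$ or $u=v$, and interchanges the two elements of $C_v$ otherwise. Consequently $\mathrm{Inn}(Q_G)$ preserves each $C_v$ setwise, the set $C_v$ is a single orbit exactly when $v$ fails to be a \emph{universal receiver} (i.e.\ when $\neg(u\mathrel{E}v)$ for some $u\neq v$), and it splits into two singleton orbits exactly when $v$ is a universal receiver. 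Write $\mathcal U\subseteq V$ and $\mathcal U'\subseteq V'$ for the sets of universal receivers. Since $\Psi$ satisfies $\Psi m_a\Psi^{-1}=m_{\Psi(a)}$, it conjugates $\mathrm{Inn}(Q_G)$ onto $\mathrm{Inn}(Q_{G'})$ and so sends orbits to orbits of the same size; in particular it carries the family of two-element orbits $\{C_v:v\notin\mathcal U\}$ bijectively onto $\{C_{v'}:v'\notin\mathcal U'\}$, yielding a bijection $\sigma\from V\setminus\mathcal U\to V'\setminus\mathcal U'$ with $\Psi(C_v)=C_{\sigma v}$ for every $v\notin\mathcal U$. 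In particular $\Psi$ sends the element of $C_v$ other than $b$ to the element of $C_{\sigma v}$ other than $\Psi(b)$.

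Next I would recover the edge relation. For distinct $u,v\in V\setminus\mathcal U$, choose $a\in C_u$ and $b\in C_v$; then $a*_G b$ is $b$ if $u\mathrel{E}v$ and is the other element of $C_v$ if $\neg(u\mathrel{E}v)$. Applying $\Psi$ and using $\Psi(C_u)=C_{\sigma u}$ and $\Psi(C_v)=C_{\sigma v}$ (with $\sigma u\neq\sigma v$) together with the analogous description of $*_{G'}$, the identity $\Psi(a*_G b)=\Psi(a)*_{G'}\Psi(b)$ forces $u\mathrel{E}v\iff\sigma u\mathrel{E'}\sigma v$. Thus $\sigma$ is an isomorphism from $G\restr(V\setminus\mathcal U)$ onto $G'\restr(V'\setminus\mathcal U')$. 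Moreover, any two distinct universal receivers are joined in both directions, and any vertex has an edge to every universal receiver distinct from it; the same statements hold in $G'$.

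It remains to match the edges from $\mathcal U$ to $V\setminus\mathcal U$, which is the delicate point, since $\Psi$ need not preserve columns over $\mathcal U$ (indeed, when $G$ is a complete digraph, $Q_G$ is the trivial kei and $\Psi$ may be any permutation). The key observation is that for $w\in\mathcal U$, $v\notin\mathcal U$ and $a\in C_w$, writing $w^{\bullet}\in\mathcal U'$ for the universal receiver whose column contains $\Psi(a)$, the argument of the previous paragraph applied to $a*_G b$ with $b\in C_v$ gives $w\mathrel{E}v\iff w^{\bullet}\mathrel{E'}\sigma v$. Hence every element of $C_w$ is sent by $\Psi$ into the column of some universal receiver of $G'$ whose out-neighbourhood within $V'\setminus\mathcal U'$ is precisely the $\sigma$-image of the out-neighbourhood of $w$ within $V\setminus\mathcal U$. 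Partitioning $\mathcal U$ into blocks according to out-neighbourhood within $V\setminus\mathcal U$, and $\mathcal U'$ likewise, this shows that $\Psi$ maps the union of the columns over any one block of $\mathcal U$ into the union of the columns over the corresponding block of $\mathcal U'$ (the one whose out-neighbourhood is the $\sigma$-image); running the same argument for $\Psi^{-1}$ shows these unions are in fact in bijection, so corresponding blocks have equal cardinality. Choosing, block by block, any bijection $\rho\from\mathcal U\to\mathcal U'$ between corresponding blocks, I would conclude by checking that $\sigma\cup\rho\from V\to V'$ is a digraph isomorphism, verifying in turn the four kinds of ordered pair of distinct vertices according to which of the two vertices lie in $\mathcal U$. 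I expect the block-matching step for the universal receivers to be the main obstacle; the remaining verifications are straightforward case checks on the operation $*_G$.
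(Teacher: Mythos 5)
Your proof is correct, and while it shares the paper's overall architecture --- the forward direction via the induced map $(v,j)\mapsto(\sigma v,j)$, and the reverse direction via a partition of the vertices into the universal receivers (the paper's ``fixed points'') and the rest, with the bijection on the non-universal-receivers read off from the fact that a kei isomorphism must preserve the two-element columns there --- it diverges genuinely on the hard part, namely the universal receivers. The paper extracts a bijection on its fixed points directly from the given kei isomorphism $\rho$ by a chain-following construction: starting from $(v_0,0)$ it repeatedly passes to the $\rho$-preimage of the twin of the current image, obtaining ``cycles'' $(v_k)_{k\in\Z}$ on which a coherent choice of coordinates $i_{v_k}$ can be made (the well-definedness requiring a first-repetition argument), and then shows that all vertices in a cycle have identical out-neighbourhoods, so that $v\mapsto\rho_V(v,i_v)$ works. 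You instead observe that the relevant invariant --- the out-neighbourhood restricted to the non-universal-receivers, transported by $\sigma$ --- is preserved by $\Psi$ at the level of columns over universal receivers, partition the universal receivers of both graphs into blocks by this invariant, and deduce from applying the argument to both $\Psi$ and $\Psi^{-1}$ that corresponding blocks have equal cardinality, so an arbitrary block-respecting bijection finishes the job. Your route trades the paper's explicit but fiddly combinatorial bookkeeping for a cleaner counting argument; the only cost is that the resulting graph isomorphism is less canonical (it involves arbitrary choices within each block), which is harmless since only existence is needed. Your framing of the initial dichotomy via orbits of the inner automorphism group, and the observation that $\Psi m_a\Psi^{-1}=m_{\Psi(a)}$ forces orbits to map to orbits of equal size, is also a nice conceptual repackaging of the paper's direct computation with twins.
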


\begin{proof}
One direction is a fairly straightforward observation:
\begin{remark*}\label{isographgivesisokei}
Isomorphic irreflexive directed graphs have isomorphic associated keis.
\end{remark*}
\begin{proof}[of Remark]
Recall that a graph isomorphism is a bijection between vertices that preserves both the edge relation and the failure of the edge relation.  Given graphs $G = (V, E)$ and $G' = (V', E')$ with an isomorphism $h:G \rightarrow G'$ between them, $u\mathrel{E}v$ in $G$ if and only if $h(u)\mathrel{E'}h(v)$ in $G'$, so $u$ is in $\bar{\phi}_G(v)$ if and only if $h(u)$ is in $\bar{\phi}_{G'}(h(v))$. Therefore by construction of the quandles $Q_G$ and $Q_G'$, $h$ induces an isomorphism $h_Q$ from $Q_G$ to $Q_G'$ taking $(u,i)$ to $(h(u),i)$.  Indeed for vertices $u$ and $v$ in $G$, we have that $(u,i) \in \phi_G(v,j)$ holds if and only if $(h(u),i) \in \phi_{G'}(h(v),j)$ holds. 
The verification that $x \ast_G y = z$ if and only if $h_Q(x) \ast_{G'} h_Q(y) = h_Q(z)$ follows immediately.
\end{proof}

For the converse, we will show that any two isomorphic keis of the form $Q_G$ and $Q_{G'}$ admit an isomorphism induced by an isomorphism of the underlying graphs $G$ and $G'$. Not all kei isomorphisms between $Q_G$ and $Q_{G'}$ arise from graph isomorphisms; indeed, Lemma~\ref{IW} gives continuum many others. Also, if the graph $K$ is the complete irreflexive directed graph on $V$, then $Q_K$ is the trivial kei on $V\times\{0,1\}$, with
$(u,i)*(v,j)=(v,j)$ for all $(u,i)$ and $(v,j)$.
Of course there are many automorphisms of the trivial kei that are not of the form given by Lemma~\ref{IW} or induced by a graph isomorphism:
any permutation of the underlying set $V\times\{0,1\}$ is an automorphism of this kei.
We will see in the Claim that follows that any kei isomorphism $\rho$
between folded keis splits into two parts, one
of the type described by Lemma \ref{IW}
and one given by an automorphisms of a trivial kei.
Each of these can be converted into a partial isomorphism of the desired form,
and the pieces recombined to yield the graph isomorphism required for the
Theorem.

To aid with intuition, for any graph $G=(V,E)$ with associated kei
$Q_G=(V\times\{0,1\},*_G)$, we refer to $V\times\{0\}\subset Q_G$ as the
\emph{bottom} of $Q_G$ and $V\times\{1\}\subset Q_G$ as the \emph{top} of
$Q_G$.  Also for any $v$ in $V$ we refer to each of $(v,0)$ and $(v,1)$ as
the \emph{twin} of the other.

\begin{claim*}\label{main}
Suppose $G=(V_G,E_G)$ and $G'=(V_{G'},E_{G'})$ are irreflexive directed graphs such that there is a kei isomorphism $\rho$ with $\rho\from Q_G\to Q_{G'}$.
Then there is bijection $f$ from $V_G \to V_{G'}$
such that, viewed as a map from $G$ to $G'$, $f$ is a graph isomorphism.
\end{claim*}

\begin{proof}[Proof of Claim]
For any graph $H=(V,E)$, we split
the underlying set $V$ into two components, which we call the
``fixed points'' and the ``moving points'' based on their behaviour in the
quandle $Q_H$.
The purely graph-theoretic definitions of the fixed points and moving points is simpler, so we give them first: the fixed points are those which are complete for inward edges, and the
moving points are those that are not.  That is,
\[
F_H=\{v\in V\st \forall u\in V (u\mathrel{E}v)\}.
\]
From the quandle point of view,
the fixed points may equivalently be defined as
those $v$ for which left multiplication by any element of $Q_H$
does not swap $(v,0)$ with $(v,1)$, that is,
\[
F_H=\{v\in V \st \forall (u,i)\in Q_H [(u,i)*_H(v,0)=(v,0)]\}.
\]
The moving points are then those not in $F_H$, that is,
$M_H=V\smallsetminus F_H$.

We shall define the function $f:V_G\to V_{G'}$ piecewise, giving
separately the restrictions of $f$ to the fixed points
$F_G$ and the moving points $M_G$.
In fact, these restrictions will themselves be bijections from
$F_G$ to $F_{G'}$ and from $M_{G}$ to $M_{G'}$, as is clearly necessary for
$f$ to be a graph isomorphism.

We are given an isomorphism $\rho:Q_G\to Q_{G'}$.
Let us denote by $\rho_V(v,i)$ and $\rho_I(v,i)$ respectively
the first and second components of $\rho(v,i)$: that is,
$\rho(v,i)=(\rho_V(v,i),\rho_I(v,i))$.

First we define $f$ on the moving points.
If $v$ is in $M_G$, then there is some $(u,i)$ in $Q_G$ that moves $(v,0)$.
That is, the vaule of $(u,i)*_G(v,0)$ is not $(v,0)$, and hence by the definition of $*_G$ it must be that $(u,i)*(v,0)$ is $(v,1)$, and furthermore that
$(u,i)*(v,1)$ is $(v,0)$.
Applying the kei isomorphism $\rho$ we have that $\rho(u,i)*\rho(v,0)=\rho(v,1)$ holds, and by injectivity $\rho(v,1)\neq\rho(v,0)$.
By the definition of $*_{G'}$, the first components of $\rho(v,0)$ and $\rho(v,1)$ must be equal.  We take $f(v)$ to be this value:
$f(v)=\rho_V(v,0)=\rho_V(v,1)$.

Clearly $f\restr M_G$ so defined is injective since $\rho$ is a bijection.
Moreover $f\restr M_G$ surjects onto $M_{G'}$.
Indeed, for $w$ in $M_{G'}$ and $(t,i)$ in
$Q_{G'}$ such that $(t,i)*_{G'}(w,0)\neq (w,0)$, we have
$\rho^{-1}(t,i)*_G\rho^{-1}(w,0)\neq \rho^{-1}(w,0)$, and so
the first component of $\rho^{-1}(w,0)$ lies in $M_G$ and has image
$w$ under $f$.

To complete the definition of $f$
it remains to give the value of  $f(v)$ for those $v$ in $F_G$.
Let $v_0$ be an element of
$F_G$.  Unlike for elements of $M_G$, it need not
be the case that $\rho_V(v_0,0)$ is the same as $\rho_V(v_0,1)$.
However, since $\rho$ is surjective, we may find
$v_{1}$ in $F_G$ and
$i_{v_1}$ in $\{0,1\}$ such that $\rho_V(v_{1},i_{v_1})=\rho_V(v_0,1)$ and $\rho_I(v_1,i_{v_1})=1-\rho_I(v_0,1)$: that is,
if $\rho(v_0,1)$ is on the bottom of the kei then $(v_1,i_{v_1})$ is chosen such that $\rho(v_1,i_{v_1})$ is its twin on the top, and conversely
if $\rho(v_0,1)$ is on the top of the kei then $(v_1,i_{v_1})$ is chosen
such that $\rho(v_1,i_{v_1})$ is its twin on the bottom.
Likewise we may find
$v_{-1}$ in $F_G$ and
$i_{v_{-1}}$ in $\{0,1\}$ such that $\rho_V(v_{-1},1-i_{v_{-1}})=\rho_V(v_0,0)$
and $\rho_I(v_0,0)=1-\rho_I(v_{-1},1-i_{v_-1})$.
We may inductively extend our definitions, 
obtaining for all $k$ in $\Z$ a vertex $v_k$ in $V_G$ and $i_{v_k}$ in $\{0,1\}$ (with $i_{v_0}=0$) such
that $\rho_V(v_k,1-i_{v_k})=\rho_V(v_{k+1},i_{v_{k+1}})$.
Note that if there is some $k$ such that $v_k=v_0$, then
$i_{v_k}$ defined in this way will be equal to $i_{v_0}$, so our notation
$i_{v_j}$ gives a well-defined function from vertices $v_j$ in $F_G$ to
members of $\{0,1\}$.  Indeed, (construing for now $i_{v_j}$ as a function
of $j$ rather than $v_j$) consider the first repetition in the sequence
$(v_0,i_{v_0}), (v_0, 1-i_{v_0}), (v_1,i_{v_1}),\ldots$.
Clearly if $(v_k,i_{v_k})$ is distinct from all of its predecessors in the
sequence, then so too is $(v_k,1-i_{v_k})$.
Thus, the first repetition in the sequence must be of the form
$(v_k,i_{v_k})$.  If $(v_k,i_{v_k})=(v_j,1-i_{v_j})$ for some $j<k$,
then of course $\rho(v_k,i_{v_k})=\rho(v_j,1-i_{v_j})$, so swapping betweeen the top and bottom of the kei, we have from the inductive construction that
$\rho(v_{k-1},1-i_{v_{k-1}})=\rho(v_{j+1},i_{v_{j+1}})$.
But then by the minimality of $k$ as giving a repetition, we must have
$j=k-1$, so $(v_k,i_{v_k})=(v_{k-1},1-i_{v_{k-1}})$, violating the fact from
the construction that $\rho(v_k,i_{v_k})\neq\rho(v_{k-1},1-i_{v_{k-1}})$.

The set $\{v_j\st j\in\Z\}$ may be finite or infinite,
but the corrresponding subset $\{\rho_V(v_j,i_{v_j})\st j\in\Z\}$ has the
same cardinality: $(v_j,i_{v_j})=(v_k,i_{v_k})$ if and only if $\rho(v_j,i_{v_j})=\rho(v_k,i_{v_k})$.
Note also that for each $k$,
the left multiplication maps $m_{\rho(v_k,1-i_{v_k})}$ and
$m_{\rho(v_{k+1},i_{v_{k+1}})}$ on
$Q_{G'}$ are the same
since $\rho(v_k,1-i_{v_k})$ and $\rho(v_{k+1},i_{v_{k+1}})$ have the same
first component.  Therefore
$m_{(v_k,1-i_{v_k})}$ and $m_{(v_{k+1},i_{v_{k+1}})}$ are the same on $Q_G$.
It follows that $v_k$ and $v_{k+1}$ have outward edges to the same
other vertices in $G$, as well as to each other, and by induction the same is
true of all members of the set $\{v_k\st k\in\Z\}$;
likewise, all members of the set $\{\rho_V(v_k,i_{v_k})\}$
have edges to one another and to the same other vertices.

The set $F_G$ may be partitioned into such ``cycles'' of vertices
$\{v_k\st k\in\Z\}$ by choosing a
starting vertex $v_0$ in each cycle.  With such choices made,
we in particular have an assignment of $i_{v}$ in $\{0,1\}$ to each $v$ in $F_G$,
and may define $f\restr F_G$ by $f(v)=\rho_V(v,i_v)$.
Clearly with this definition $f\restr F_G$ is a bijection
from $F_G$ to its image.  Moreover its image is all of $F_{G'}$:
if $(t,i)*_{G'}(w,0)= (w,0)$ for all $(t,i)$ in $Q_{G'}$, then
$\rho^{-1}(t,i)*_G\rho^{-1}(w,0)=\rho^{-1}(w,0)$ for all $(t,i)$ in $Q_{G'}$,
that is, $(u,j)*_G\rho^{-1}(w,0)=\rho^{-1}(w,0)$ for all $(u,j)$ in $Q_G$.

We have thus constructed a bijection $f:V_G\to V_{G'}$, and it remains to
show that $f$ is in fact a graph isomorphism from $G$ to $G'$.
So let $u$ and $v$ be vertices of $G$.
If $v$ is in $F_G$, then $f(v)$ is in $F_{G'}$, so both $u\mathrel{E_G}v$ and
$f(u)\mathrel{E_{G'}}f(v)$ hold.
Suppose $v$ is in $M_G$.
If $u$ is in $F_G$ we have $i_u$ in $\{0,1\}$ as defined above, and otherwise
take $i_u=0$.  Then
\[
(u,i_u)*_G(v,0)=\begin{cases}
(v,0)&\text{if }u\mathrel{E_G}v\text{ or } u=v\\
(v,1)&\text{otherwise,}
\end{cases}
\]
so
\[
\rho(u,i_u)*_G\rho(v,0)=\begin{cases}
\rho(v,0)&\text{if }u\mathrel{E_G}v\text{ or } u=v\\
\rho(v,1)&\text{otherwise.}
\end{cases}
\]
Since the first component of $\rho(u,i_u)$ is $f(u)$ and the first component
of $\rho(v,0)$ is $f(v)$, we have that
$f(u)\mathrel{E_G'}f(v)$ if and only if $u\mathrel{E_G}v$,
completing the proof that $f$ is a graph isomorphism from $G$ to $G'$.
\end{proof}

With the Claim we have shown that, whilst not every isomorphism of keis
$Q_G$ and $Q_{G'}$ need arise from a graph isomorphism, such an isomorphism
can be used to define a graph isomorphism of $G$ and $G'$, which
by the Remark gives rise to a (potentially different) isomorphism of
$Q_G$ and $Q_G'$.
This completes the proof of Theorem~\ref{cong}.
\end{proof}

\begin{thm}
The classes of keis, quandles, racks, left distributive algebras, and
algebras satisfying $\Sigma$ are each Borel complete.
\end{thm}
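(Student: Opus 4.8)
The plan is to read off all five statements from constructions and results already in place: keis, quandles, racks and left distributive algebras can be handled together, while algebras satisfying $\Sigma$ need a separate, group-theoretic, reduction.

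For the first four classes the crucial observation is that the map $G\mapsto Q_G$ of Definition~\ref{keifromgraph} is \emph{continuous} from the space of countable irreflexive directed graphs to the space of countable keis. Continuity is immediate from the defining formula: whether $(u,i)*_G(v,j)=(w,\ell)$ holds in $Q_G$ is decided by the single bit ``$u\mathrel{E}v$'' of $G$ together with data that is independent of $G$ (whether $u=v$, and the comparison of $(w,\ell)$ with $(v,j)$ and $(v,1-j)$), so after the identification of $V\times\{0,1\}$ with $\N$ of Section~\ref{prelims} each coordinate of $Q_G$ depends on at most one coordinate of $G$. By Theorem~\ref{cong} this continuous, hence Borel, map is a reduction of isomorphism of countable irreflexive directed graphs to isomorphism of keis; since the former is Borel complete and $\leq_B$ is transitive, isomorphism of keis is Borel complete. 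To climb to quandles, racks and left distributive algebras I would use that all four classes have the single-operation signature $\{*\}$, that each is contained in the next, and that an isomorphism of two such structures is just a bijection of underlying sets preserving $*$. Consequently the inclusion of the space of countable keis into the space of countable quandles, and likewise into the spaces of countable racks and of countable left distributive algebras, is a restriction of the identity on the space of all algebras on $\N$, hence continuous, and it is a reduction because two keis are isomorphic as keis precisely when they are isomorphic as quandles, as racks, or as left distributive algebras. As these larger classes are first-order and isomorphism of keis is Borel complete, transitivity of $\leq_B$ delivers the result for each.

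For algebras satisfying $\Sigma$ the signature $\{*,\circ\}$ is larger, so instead I would reduce from isomorphism of countable groups, which is known to be Borel complete (see, e.g., \cite{Gao:IDST}). As noted after Definition~\ref{LDmonoiddefn}, any group $(H,\circ)$, taken with its conjugation operation $a*b=a\circ b\circ a^{-1}$, is an algebra satisfying $\Sigma$; I would send $H$ to $(H,*,\circ)$. This map is continuous: its $\circ$-part copies the group table verbatim, while for the $*$-part the equivalence $a\circ b\circ a^{-1}=c\iff a\circ b=c\circ a$ shows that ``$a*b=c$'' is a clopen condition on the group table. A bijection of underlying sets is an isomorphism of the associated $\Sigma$-algebras if and only if it is a group isomorphism: a group isomorphism automatically respects conjugation, and conversely any isomorphism of the $\Sigma$-algebras respects $\circ$. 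Hence $H\cong H'$ if and only if $(H,*,\circ)\cong(H',*',\circ')$, so the map is a Borel reduction of isomorphism of countable groups to isomorphism of algebras satisfying $\Sigma$, and Borel completeness transfers.

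I do not expect a serious obstacle: most of the work is bookkeeping, the substance of the kei case having already been supplied by Theorem~\ref{cong}, and the continuity and isomorphism-invariance checks are immediate from the defining formulas. The only step that needs a moment's thought is the $\Sigma$ case, where one must notice that keis (or graphs) cannot be fed in directly --- an algebra satisfying $\Sigma$ carries a composition-like operation that the kei construction does not provide --- so that a separate reduction, most naturally from groups, is required; this is also where the second external ingredient, Borel completeness of isomorphism of countable groups, enters.
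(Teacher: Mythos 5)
Your proposal is correct and follows essentially the same route as the paper: the continuous map $G\mapsto Q_G$ combined with Theorem~\ref{cong} handles keis and, via the subclass inclusions in the common signature $\{*\}$, quandles, racks, and left distributive algebras; and the separate reduction from countable groups (whose isomorphism relation is Borel complete, by Mekler's theorem) via $(H,\circ)\mapsto(H,\circ,*)$ with $*$ conjugation handles the algebras satisfying $\Sigma$. The only differences are expository --- you spell out the ``climbing'' step and the clopen-ness of ``$a*b=c$'' slightly more explicitly than the paper does.
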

\begin{proof}
Implicit in the statement that these classes of structures are Borel complete
is that we are considering the classes of countable such structures
with underlying set $\N$,
with each class topologized as described in Section~\ref{prelims}.

The map $G\mapsto Q_G$ from the class of graphs to the class of keis
is not only Borel but in fact continuous.
Recall from Section \ref{prelims} that the subbasic open sets in the space of graphs are of the form either $\{G\st m\mathrel{E}n\}$ or $\{G\st m\not\mathrel{E} n\}$.  Similarly, for quandles with underlying set $\N$, the subbasic
open sets are of the form $\{(\N, \ast)\st u \ast v = w\}$ or $\{(\N, \ast)\st u \ast v \neq w\}$.  Then by the construction of our dynamical keis,
it is clear that the inverse image of any open set is open (as we defined $\ast$ in terms of the edge relation of $E$).
Hence the map taking $G$ to $Q_G$ is continuous and so certainly Borel,
and therefore because the class of graphs is Borel complete,
we have shown that the
keis, and hence quandles, and hence racks, and hence left distributive algebras are Borel complete.

Because the language of $\Sigma$ is different from that of left distributive algebras, a different argument is needed to show that the class
of algebras satisfying $\Sigma$ is Borel complete.
For this we utilize the result of Mekler~\cite{Mek:SNG}
that the class of groups is Borel complete (see \cite[\S 2.3]{FS:BRCCS}
for a sketch of the argument). As discussed after Definition~\ref{LDmonoiddefn}, every group endowed with its conjugation operation and its
group operation satisfies $\Sigma$.  The inclusion map
$(G,\circ)\mapsto(G,\circ,*)$ where $\circ$ denotes the group operation and
$*$ denotes conjugation is easily seen
to be continuous and so is certainly Borel.
Of course, since the group operation
is one of the two operations in the language of $\Sigma$,
and the other is conjugation which is
determined by the group operation, two groups are isomorphic if and only if their corresponding structures satisfying $\Sigma$ are isomorphic.
We thus have that group isomorphism Borel reduces to isomorphism as
algebras satisfying $\Sigma$, and therefore that the latter is Borel complete.
\end{proof}

\section{Concluding remarks}

We have shown that in the Borel reducibility sense,
the class of left distributive algebras is as complex as possible.
Another formalization of the question of complexity is in a
category-theoretic setting.
Just as the class of graphs is maximal in the Borel completeness sense
(and indeed our proof made use of this fact),
the \emph{category} of graphs is universal
in the sense that every algebraic category fully embeds into it
\cite[Theorem~5.3]{PuT:CATRGSC}.
There are many such universality results for other categories
--- see, for example, \cite{PuT:CATRGSC} ---
raising the following natural question.

\begin{question}
Does the category of graphs fully embed into the category of left distributive
algebras?
\end{question}
Of course the same question may also be asked of the category of racks, the
category of quandles, and the category of keis.  We note that the construction
of $Q_G$ from $G$ in Theorem~\ref{cong} is not even functorial, since a
graph homomorphism need not preserve non-edges.
Potentially an even more problematic obstacle, however, is the fullness
requirement --- we have seen that dynamical
keis admit many more homomorphisms than simply those arising from graph
homomorphisms, at least in our construction.
On the other hand, even if it turns out that
the category of graphs cannot be fully embedded into the category of keis
because keis always admit many homomorphisms,
there may be interesting minimal-non-fullness, maximal-complexity
results to be obtained in this direction.
As an analogy, there can be no full embedding of the category of graphs
into the category of abelian groups,
as any two abelian groups $A$ and $B$
admit at least one homomorphism between them
(the 0 map) and the set of homomorphisms between them $\Hom(A, B)$ naturally
forms an abelian group.
Nevertheless
Prze\'zdziecki \cite{Prz:GrAb} has shown that there is an embedding
$\calA$ from
the category of graphs to the category of abelian groups such that
$\Hom(\calA G,\calA G')$ is the free abelian group generated by $\Hom(G,G')$
--- the best possible result given these constraints.

As mentioned in the the introduction, the implication operation in a Boolean Algebra is left distributive.  Borel completeness of the isomorphism relation on Boolean algebras was proven by Camerlo and Gao in \cite{CGao:CIRCBA} and does not follow from what we have proven.  Their work shows that a classification of countable Boolean algebras due to Ketonen uses objects for the complete invariants that ``cannot be improved in an essential way'' \cite{CGao:CIRCBA}.

In contrast, our main result is that the class of quandles is Borel complete while tame knots are trivial in terms of Borel reducibility.  Whilst the subclass of finitely presented quandles contains the quandles associated with all tame knots and is itself trivial in this context, it is not clear that this finite presentability constraint can be used in practice to simplify the quandle isomorphism problem. Thus, our result suggests that there may well exist a more practical complete invariant for tame knots, with an isomorphism problem that is not as difficult as that for quandles.

\bibliography{ABT}

%
%
%
%
%
%

\affiliationone{
   Andrew D. Brooke-Taylor\\
   University of Bristol\\
   School of Mathematics\\
   Howard House\\
   Queen's Avenue\\
   Bristol  BS8 1SN\\
   United Kingdom
   \email{a.brooke-taylor@bris.ac.uk}}
\affiliationtwo{
   Sheila K. Miller\\
   City University of New York\\
   New York City College of Technology\\
   Department of Mathematics\\
   300 Jay Street\\
   Brooklyn, NY 11201\\
   USA
   \email{smiller@citytech.cuny.edu}}
%
\end{document}